\documentclass[oneside, a4paper, 11pt]{article}
\usepackage[hidelinks]{hyperref}
\usepackage{amsmath, amsthm, amssymb, amsfonts}
\usepackage[left=3cm,right=3cm,top=3cm,bottom=3cm,a4paper]{geometry}
\usepackage{thmtools}
\newtheorem{thm}{Theorem}[section]

\newtheorem{lem}[thm]{Lemma}
\newtheorem{conj}[thm]{Conjecture}

\linespread{1.5}

\title{\textbf{On a sum of positive rational numbers whose product is 1}}
\author{\large{Jungin Lee}}
\date{}

\begin{document}
\maketitle

\vspace{0mm}

\textbf{Abstract.} In this note, we prove that for every two positive integers $m \geq n \geq 9$, there exist $n$ positive rational numbers whose product is 1 and sum is $m$.

\vspace{5mm}

\textbf{Keywords} : Diophantine equation, Legendre's three square theorem

\textbf{AMS Subject Classification} : 11D68

\vspace{5mm}

\section{Introduction}
Bondarenko [\ref{ref:1}] proved that for every two positive integers $m \geq n \geq 12$, there exist $n$ positive rational numbers whose product is 1 and sum is $m$. The key point of the proof in [\ref{ref:1}] is Lagrange's four square theorem. In this note, we will improve this result to $m \geq n \geq 9$ by using Legendre's three square theorem. Also we will show that for every positive integer $5 \leq m\leq 100$, there exist 5 positive rational numbers whose product is 1 and sum is $m$. 

\section{Results}

\begin{lem} \label{lem2.1}
([\ref{ref:1}], Lemma 1) For every $m>0$, there exist $a,b,c \in \mathbb{Q}_{>0}$ such that $a+b+c=m$ and $abc=1$ if and only if $x^3+y^3+z^3=mxyz$ has a solution in positive integers. 
\end{lem}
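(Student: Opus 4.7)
The plan is to prove Lemma~\ref{lem2.1} by exhibiting an explicit correspondence between positive integer triples $(x,y,z)$ on the cubic $x^3+y^3+z^3 = mxyz$ and positive rational triples $(a,b,c)$ with $abc=1$ and $a+b+c=m$. One direction is a one-line calculation; the other requires a $p$-adic valuation argument that forces certain integers to be perfect cubes.

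For the ``if'' direction, given positive integers $x,y,z$ with $x^3+y^3+z^3 = mxyz$, I would set $a = x^2/(yz)$, $b = y^2/(xz)$, $c = z^2/(xy)$; then $abc=1$ and $a+b+c = (x^3+y^3+z^3)/(xyz) = m$.

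For the converse, I would start from $(a,b,c)\in\mathbb{Q}_{>0}^3$ satisfying $abc=1$ and $a+b+c=m$, taking $m$ to be a positive integer (the case of interest in the rest of the paper). Let $d$ be the least positive common denominator of $a, b, c$ and set $A = ad$, $B = bd$, $C = cd$, so that $A,B,C,d$ are positive integers with $A+B+C = md$ and $ABC = d^3$. The crux of the proof is the claim that $A$, $B$, $C$ are each perfect cubes, which I would establish by a prime-by-prime analysis. Fix a prime $p$ and let $(\alpha,\beta,\gamma) = (v_p(a), v_p(b), v_p(c))$; from $abc=1$ and $a+b+c = m \in \mathbb{Z}_{>0}$ we have $\alpha+\beta+\gamma=0$ and $v_p(a+b+c)\geq 0$. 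A short case analysis using the ultrametric inequality rules out all configurations except two: either $\alpha=\beta=\gamma=0$ or, after permuting, $(\alpha,\beta,\gamma) = (-k,-k,2k)$ for some positive integer $k$. The key point is that if some valuation is a strict minimum and is negative, or if two negative valuations are unequal, then $v_p(a+b+c)$ equals that strict minimum and is negative, contradicting $v_p(m)\geq 0$.

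With this dichotomy, the minimality of $d$ forces $v_p(d) = 0$ in the first case and $v_p(d) = k$ in the second, and in both cases $v_p(A), v_p(B), v_p(C)$ come out to be multiples of $3$. Hence $A$, $B$, $C$ are each perfect cubes: write $A = x^3$, $B = y^3$, $C = z^3$ for positive integers $x,y,z$. Taking cube roots of $ABC = d^3$ gives $xyz = d$, and the identity $A+B+C = md$ then becomes $x^3+y^3+z^3 = m\cdot xyz$, producing the required positive integer solution on the cubic. I expect the main obstacle to be the $p$-adic dichotomy: the configurations that must be excluded are exactly those in which $a+b+c$ fails to be $p$-integral, so the argument has to use the integrality of $m$ at each prime in a fairly sharp way.
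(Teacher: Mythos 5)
Your proof is correct and complete; note, however, that the paper itself gives no proof of Lemma \ref{lem2.1} --- it is quoted verbatim as Lemma 1 of Bondarenko's paper [\ref{ref:1}] --- so there is no in-paper argument to compare against, only the reference. Your ``if'' direction via $a=x^2/(yz)$, $b=y^2/(xz)$, $c=z^2/(xy)$ is the standard one, and you correctly identify that the converse is the nontrivial half: one cannot simply invert that map over $\mathbb{Q}$, since recovering $x:y:z$ from $(a,b,c)$ requires $a/b$ and $b/c$ to be rational cubes. Your valuation dichotomy is exhaustive: with $\alpha+\beta+\gamma=0$ and not all zero, the minimum is negative and cannot be attained three times, while a uniquely attained negative minimum forces $v_p(a+b+c)<0$; hence only $(0,0,0)$ and permutations of $(-k,-k,2k)$ survive, and in both cases $v_p(A),v_p(B),v_p(C)$ are multiples of $3$, so $A,B,C$ are cubes and $x^3+y^3+z^3=mxyz$ follows with $xyz=d$. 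One small caveat: you prove the statement for $m$ a positive \emph{integer}, whereas the lemma is phrased for $m>0$; this is harmless for the paper (where $m$ is always a positive integer), but for non-integral rational $m$ the step $v_p(a+b+c)\geq 0$ would have to be replaced by $v_p(a+b+c)=v_p(m)$ and the case analysis adjusted, so it is worth stating the integrality hypothesis explicitly as you did.
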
 

\vspace{3mm}

Let $A_n$ ($n \in \mathbb{N}$) be $\left \{ s \in \mathbb{N} \mid \exists a_1, \cdots, a_n \in \mathbb{Q}_{>0}\: \: \prod_{i=1}^{n}a_i=1, \, \sum_{i=1}^{n}a_i=s \right \}$. Then $n \in A_n$ for every $n \in \mathbb{N}$, and $2+\frac{1}{2}+2+\frac{1}{2}=5$ implies $5 \in A_4$. From the table 3 of [\ref{ref:2}] and Lemma \ref{lem2.1}, $5,6,10,14,30 \in A_3$. Denote $[n]^r$ if $r \in A_n$. It is easy to see that $r_1 \in A_{n_1}$ and $r_2 \in A_{n_2}$ imply $r_1+r_2 \in A_{n_1+n_2}$, so we can define $[n_1]^{r_1}+[n_2]^{r_2}=[n_1+n_2]^{r_1+r_2}$.

\vspace{3mm}

\begin{thm} \label{thm2.2}
For every positive integer $n \geq 9$, $A_n=\left \{ k \in \mathbb{N} \mid k \geq n \right \}$. 
\end{thm}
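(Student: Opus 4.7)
The plan is to reduce the theorem to the single case $n = 9$ and then to prove $A_9 \supseteq \{k \in \mathbb{N} : k \geq 9\}$ using Legendre's three-square theorem together with the seeds. For the reduction, AM--GM applied to $n$ positive rationals of geometric mean $1$ gives $A_n \subseteq \{k \in \mathbb{N} : k \geq n\}$; conversely, appending a copy of $1$ to any tuple preserves the product and adds $1$ to the sum, so $A_{n+1} \supseteq A_n + 1$, and iterating yields $A_n \supseteq A_9 + (n - 9)$ for every $n \geq 9$. The whole theorem therefore reduces to the single inclusion $A_9 \supseteq \{k : k \geq 9\}$.

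For this inclusion I would combine the seeds $\{3, 5, 6, 10, 14, 30\} \subseteq A_3$ and $5 \in A_4$ with the additivity rule $[n_1]^{r_1} + [n_2]^{r_2} = [n_1 + n_2]^{r_1 + r_2}$. These already place many integers in $A_9$ via decompositions such as $A_3 + A_3 + A_3$, $A_4 + A_3 + A_2$ and $A_4 + A_4 + A_1$. To cover the rest, the key step is a supplementary lemma that uses Legendre: since every positive integer $m$ not of the form $4^a(8b + 7)$ is a sum $x^2 + y^2 + z^2$ of three squares, one can build an element of some small $A_n$ from each such representation. A natural candidate is the $4$-tuple $\bigl(x^2/(xyz),\,y^2/(xyz),\,z^2/(xyz),\,xyz\bigr)$, whose product equals $1$ and whose sum is $(x^2 + y^2 + z^2)/(xyz) + xyz$; this sum is an integer precisely when $xyz$ divides $x^2 + y^2 + z^2$, a Markov-type condition to which Legendre's theorem combined with a descent supplies an infinite family of solutions, and hence an infinite family of integers in $A_4$.

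A final case analysis then realises every $k \geq 9$ as an additivity sum with total index $9$, padding with copies of $1 \in A_1$ as needed: for each $k$ one chooses a small shift $c \geq 0$ so that $k - c$ avoids the exceptional class $4^a(8b + 7)$, invokes the Legendre construction to produce an $A_n$-element, and combines it with the seeds. The main obstacle is precisely this bookkeeping step --- the exceptional $k$'s for which no small shift works simultaneously, together with the small $k$'s in some initial window $[9, K]$ where the asymptotic construction is not yet active, must be verified by hand or by explicit ad hoc tuples. This is presumably what motivates the separate, computationally flavoured companion claim $A_5 \supseteq \{5, \dots, 100\}$ announced in the introduction.
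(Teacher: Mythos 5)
Your reduction to the single case $n=9$ (padding with $1$'s for the inclusion $A_n \supseteq A_9 + (n-9)$, AM--GM for the reverse containment) matches the paper, and you are right that Legendre's three-square theorem is the engine. But the central step --- how a representation $m = x^2+y^2+z^2$ actually produces an element of $A_9$ --- is where your proposal breaks down. Your candidate $4$-tuple $\bigl(x^2/(xyz),\,y^2/(xyz),\,z^2/(xyz),\,xyz\bigr)$ requires $xyz \mid x^2+y^2+z^2$, and this Markov-type divisibility condition has nothing to do with Legendre's theorem: the equation $x^2+y^2+z^2=kxyz$ is solvable only for $k\in\{1,3\}$, and the resulting sums $k + xyz$ run over an exponentially sparse set (products of Markov triples), nowhere near dense enough to cover all integers $\geq 9$ even after adding seeds. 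What the paper uses instead, and what your proposal is missing, is a pair of \emph{parametric families inside $A_3$}: from Dofs' solutions of $x^3+y^3+z^3=mxyz$ together with Lemma \ref{lem2.1}, one gets $a^2+5 \in A_3$ and $\frac{a^2+33}{2} \in A_3$ for every non-negative integer $a$. Adding three such triples gives $a^2+b^2+c^2+15 \in A_9$ and $\frac{a^2+b^2+c^2+99}{2}\in A_9$, and only now does Legendre's theorem apply to the shifted targets $m-15$ and $2m-99$.

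A second, related gap is your treatment of the exceptional class $4^k(8t+7)$. A single quadratic family with one shift leaves infinitely many exceptions (all $m$ with $m-c$ in that class), so no finite amount of hand-checking can close it; you gesture at ``choosing a small shift $c$'' but the shifts available to you are constrained to come from elements of some $A_j$ with indices summing to $9$. The paper resolves this by having the second family carry a denominator of $2$: if $m\geq 50$ escaped both families then $2m-99$ would be an odd number of the form $4^k(8t+7)$, forcing $m\equiv 1 \pmod 4$ and hence $m-15\equiv 2\pmod 4$, which is incompatible with $m-15$ also lying in that class. That leaves only finitely many cases ($9\leq m\leq 14$ and $m\in\{22,30,38,43,46\}$), which are then dispatched by explicit seed decompositions. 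To repair your proof you would need to replace the Markov-type construction with genuine parametric families in $A_3$ (or a comparable source of two arithmetically complementary quadratic families) before the Legendre argument can do any work.
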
 

\begin{proof} 
\begin{table}[h]
\centering
\small
\begin{tabular}{|c|c|c|c|}\hline
$n$ & $[n]^9$ & $n$ & $[n]^9$ \\\hline
9 & $[9]^9$ & 22 & $[14]^3+[5]^3+[3]^3$ \\\hline
10 & $[5]^4+[5]^5$ & 30 & $[10]^3+[10]^3+[10]^3$ \\\hline
11 & $[5]^3+[6]^6$ & 38 & $[30]^3+[5]^3+[3]^3$ \\\hline
12 & $[6]^3+[6]^6$ & 43 & $[30]^3+[10]^3+[3]^3$ \\\hline
13 & $[6]^3+[5]^4+[2]^2$ & 46 & $[30]^3+[10]^3+[6]^3$ \\\hline
14 & $[6]^3+[5]^3+[3]^3$ & &  \\\hline
\end{tabular}
\caption{remaining cases}
\label{tab:1} 
\end{table}
Suppose that $A_9=\left \{ k \in \mathbb{N} \mid k \geq 9 \right \}$. Then for every $m \geq n \geq 9$, there exist 9 positive rational numbers $b_1, \cdots, b_9$ whose product is 1 and sum is $m-n+9$. In this case, \[a_k=\left\{\begin{matrix} b_k\: \: (1\leq k\leq 9)\\ 1\: \: (9<k\leq n) \end{matrix}\right.\] satisfies $a_1 \cdots a_n=1$ and $a_1+ \cdots +a_n=m$, so $A_n=\left \{ k \in \mathbb{N} \mid k \geq n \right \}$ for every $n \geq 9$. Thus it is enough to show that $A_9=\left \{ k \in \mathbb{N} \mid k \geq 9 \right \}$. \\
For every non-negative integer $a$, $\left \{ x,y,z,m \right \}=\left \{ 2,\,a^2+a+1,\,a^2-a+1,\,a^2+5 \right \}$ and \\
$\left \{ \frac{a^2+147}{4},\, \frac{a^4+6a^3+36a^2+98a+147}{16}, \, \frac{a^4-6a^3+36a^2-98a+147}{16},\, \frac{a^2+33}{2} \right \}$ are solutions of the equation $x^3+y^3+z^3=mxyz$. (These solutions are on the page 205 and 210 of [\ref{ref:3}] as an example, respectively.) $a^4-6a^3+36a^2-98a+147=(a^2-3a)^2+27(a-2)^2+(10a+39)>0$ for every non-negative integer $a$, so Lemma \ref{lem2.1} implies that there exist $p_i,q_i,r_i\in \mathbb{Q}_{>0}$ ($i=1,2$) such that $p_1q_1r_1=p_2q_2r_2=1$, $p_1+q_1+r_1=a^2+5$ and $p_2+q_2+r_2=\frac{a^2+33}{2}$. (If $x^3+y^3+z^3=mxyz$ has a solution in positive rational numbers, then it has a solution in positive integers.) \\
Thus for every non-negative integers $a$, $b$ and $c$, there exist $a_1, \cdots, a_9, b_1, \cdots b_9 \in \mathbb{Q}_{>0}$ such that $a_1 \cdots a_9=b_1 \cdots b_9=1$, $a_1+\cdots +a_9=(a^2+5)+(b^2+5)+(c^2+5)=a^2+b^2+c^2+15$ and $b_1+\cdots +b_9=\frac{a^2+33}{2}+\frac{b^2+33}{2}+\frac{c^2+33}{2}=\frac{a^2+b^2+c^2+99}{2}$. \\
Suppose that a positive integer $m \geq 50$ cannot be represented of the forms $a^2+b^2+c^2+15$ or $\frac{a^2+b^2+c^2+99}{2}$. Then by Legendre's three square theorem, both of $m-15$ and $2m-99$ are of the form $4^k(8t+7)$ ($k,t \geq 0$). $2m-99$ is odd, so $2m-99 \equiv 7 \;\;(mod\;8)$ and this implies $m-15 \equiv 2 \;\;(mod\;4)$, a contradiction. \\
If a positive integer $15 \leq m \leq 49$ cannot be represented of the form $a^2+b^2+c^2+15$, then $m-15=4^k(8t+7)$ for some $k,t \geq 0$. Thus $m-15 \in \left \{ 7,15,23,31,28 \right \}$. Now the only remaining cases are $9 \leq m \leq 14$ and $m \in \left \{ 22,30,38,43,46 \right \}$. Table \ref{tab:1} shows that if $9 \leq m \leq 14$ or $m \in \left \{ 22,30,38,43,46 \right \}$, then $m \in A_9$. 
\end{proof}

\begin{table}[h]
\centering
\small
\begin{tabular}{|c|c|c|c|c|c|}\hline
$n$ & $(b_1,b_2,b_3,b_4,b_5)$ & $n$ & $(b_1,b_2,b_3,b_4,b_5)$ & $n$ & $(b_1,b_2,b_3,b_4,b_5)$ \\\hline
9 & $(1,2,4,18,3)$ & 60 & $(1,28,4,126,3)$ & 84 & $(1,72,66,4,44)$ \\\hline
17 & $(1,2,25,45,30)$ & 61 & $(1,18,14,3,126)$ & 86 & $(1,48,4,150,60)$ \\\hline
25 & $(1,3,5,100,30)$ & 65 & $(1,2,116,18,87)$ & 87 & $(1,20,3,200,30)$ \\\hline
33 & $(1,4,108,150,90)$ & 66 & $(1,2,126,48,28)$ & 88 & $(1,25,2,125,50)$ \\\hline
35 & $(1,2,56,18,84)$ & 67 & $(1,38,12,2,57)$ & 89 & $(1,60,34,3,85)$ \\\hline
41 & $(1,3,108,150,90)$ & 70 & $(1,2,132,18,33)$ & 90 & $(1,75,10,147,14)$ \\\hline
45 & $(1,3,4,162,9)$ & 73 & $(1,7,3,196,42)$ & 93 & $(1,2,180,12,10)$ \\\hline
47 & $(1,3,128,144,6)$ & 75 & $(1,32,4,169,104)$ & 95 & $(1,42,15,2,105)$ \\\hline
49 & $(1,3,25,4,150)$ & 79 & $(1,17,90,3,170)$ & 96 & $(1,63,6,196,42)$ \\\hline
54 & $(1,2,100,180,6)$ & 80 & $(1,6,2,147,14)$ & 97 & $(1,80,12,200,30)$ \\\hline
57 & $(1,3,160,24,10)$ & 81 & $(1,45,156,4,130)$ & 100 & $(1,78,9,192,104)$ \\\hline
59 & $(1,3,160,24,60)$ & 82 & $(1,35,3,140,30)$ & & \\\hline
\end{tabular}
\caption{solution of $\frac{b_2}{b_1}+\frac{b_3}{b_2}+\frac{b_4}{b_3}+\frac{b_5}{b_4}+\frac{b_1}{b_5}=n$ ($n \leq 100$)}
\label{tab:2} 
\end{table}

It is easy to see that $n \in A_4$ implies $n+1 \in A_5$. The list of elements of $A_4$ up to 100 (possibly not complete) is given in 9.9.2 of [\ref{ref:4}]. From this, we obtain that if a positive integer $5 \leq n \leq 100$ satisfies $n \notin A_5$, then $n$ is one of the 9,\,17,\,25,\,33,\,35,\,41,\,45,\,47,\,49,\,54,\,57,\,59,\,60, \\ 61,\,65,\,66,\,67,\,70,\,73,\,75,\,79,\,80,\,81,\,82,\,84,\,86,\,87,\,88,\,89,\,90,\,93,\,95,\,96,\,97 or 100. Table \ref{tab:2} shows that all of these integers are elements of $A_5$. Thus every positive integer $5 \leq n \leq 100$ is an element of $A_5$. This result supports the following conjecture.

\begin{conj} \label{conj2.3}
For every positive integer $n \geq 5$, $A_n=\left \{ k \in \mathbb{N} \mid k \geq n \right \}$. 
\end{conj}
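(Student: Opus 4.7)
Theorem \ref{thm2.2} handles $n \geq 9$, so only $n \in \{5,6,7,8\}$ remain. My first step is to reduce these three extra cases to $n=5$. We have $1 \in A_1$, $3 \in A_3$ (via $(1,1,1)$), and $A_2 = \{2\}$ (if $a + a^{-1} = s$ with $a \in \mathbb{Q}_{>0}$ and $s$ a positive integer, then $s^2 - 4$ must be a perfect square, which forces $s = 2$). Combined with the additive rule $A_{n_1} + A_{n_2} \subseteq A_{n_1+n_2}$ from the paper and the AM-GM lower bound $A_n \subseteq \{k \geq n\}$, this yields
\[
A_6 \supseteq A_5 + 1, \qquad A_7 \supseteq A_5 + 2, \qquad A_8 \supseteq A_5 + 3.
\]
Hence it suffices to prove $A_5 = \{k \in \mathbb{N} \mid k \geq 5\}$.

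\textbf{Strategy for $A_5$.} I would adapt the Legendre-type argument of Theorem \ref{thm2.2} one dimension down. The inclusion $A_5 \supseteq A_3 + A_2 = A_3 + 2$, together with the two parametric families
\[
\{a^2 + 5 : a \in \mathbb{Z}_{\geq 0}\} \cup \{(a^2 + 33)/2 : a \in \mathbb{Z}_{\geq 0}\} \subseteq A_3
\]
from the proof of Theorem \ref{thm2.2}, already gives $\{a^2 + 7\} \cup \{(a^2 + 37)/2\} \subseteq A_5$. To turn this into a complete cover, I would first hunt for further parametric families of positive rational (hence, by Lemma \ref{lem2.1}, positive integer) solutions of the Markov-type cubic $x^3 + y^3 + z^3 = m\,xyz$, enlarging $A_3$ until its complement modulo some small modulus is controllable, and then combine with a two-squares or three-squares theorem. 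As a complementary approach, I would work directly in $A_5$ through the substitution $a_i = b_{i+1}/b_i$ (indices mod $5$), reducing the problem to finding positive integers $b_1, \ldots, b_5$ with $\sum_{i=1}^{5} b_{i+1}/b_i = n$; the ad hoc tuples in Table \ref{tab:2} are natural starting points from which to attempt to extract one-parameter families, perhaps by fixing three of the $b_i$ and solving a Pell-like relation in the remaining two.

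\textbf{Main obstacle.} The chief difficulty is a density gap. The known families in $A_3$ contribute only $O(\sqrt{N})$ integers up to $N$, far too sparse to cover every sufficiently large integer even after finitely many shifts by elements of $A_1, A_2, A_3$. A successful proof almost certainly requires either (i) a genuinely two-parameter family of positive rational solutions of $x^3 + y^3 + z^3 = m\,xyz$ whose $m$-values fill the image of a binary quadratic form, so that a Legendre-type argument applies, or (ii) a direct polynomial parametrisation of integer $5$-tuples $(b_1, \ldots, b_5)$ whose associated cyclic sum is linear in the parameter. Either route amounts to exhibiting new rational curves on a cubic surface that are not visible from the families used in Theorem \ref{thm2.2}, and this is presumably why the author stopped at a conjecture rather than extending the theorem.
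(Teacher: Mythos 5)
This statement is a conjecture in the paper: there is no proof of it to compare yours against, only the finite verification that every integer $5 \leq n \leq 100$ lies in $A_5$ (Table \ref{tab:2} together with the $A_4$ list from [\ref{ref:4}]). Your reduction of the cases $n \in \{6,7,8\}$ to the single case $n=5$ is correct: $1 \in A_1$, $A_2 = \{2\}$ (your argument via $x^2-sx+1$ is fine), $3 \in A_3$, the additivity $A_{n_1}+A_{n_2} \subseteq A_{n_1+n_2}$, and the AM--GM inclusion $A_n \subseteq \{k \in \mathbb{N} \mid k \geq n\}$ together show that $A_5 = \{k \geq 5\}$ would imply $A_n = \{k \geq n\}$ for $6 \leq n \leq 8$, while Theorem \ref{thm2.2} covers $n \geq 9$ unconditionally. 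This is the same padding-by-ones idea the paper uses to reduce $n \geq 9$ to $n=9$, and it is a worthwhile observation that the entire conjecture hinges on the case $n=5$.

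However, the proposal does not prove the conjecture, and you say so yourself: the case $n=5$ is left as a research program, so there is a genuine gap exactly where the paper also leaves one. Your diagnosis of the obstruction is accurate and worth recording. Theorem \ref{thm2.2} succeeds because three independent copies of the family $\{a^2+5\} \subseteq A_3$ can be summed inside $A_9$, converting a sparse quadratic family into the image of a ternary quadratic form to which Legendre's theorem applies; inside $A_5$ one can afford at most one $A_3$-family shifted by $2$ (since $A_2=\{2\}$), or one $A_4$-family shifted by $1$, and every known family contributes only $O(\sqrt{N})$ integers up to $N$. Note moreover that Bondarenko's result quoted before Conjecture \ref{conj2.4} shows $A_3$ omits infinitely many integers (all $4k^2$ with $3 \nmid k$), so no enlargement of the known $A_3$-families can make $A_3+A_2$ cover everything; a successful argument must go through $A_4$ or through a genuinely two-parameter construction of the kind you sketch. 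In short: your reduction is sound and slightly sharpens the paper's evidence, but the core of the conjecture remains unproven in your write-up just as in the paper.
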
 

\vspace{3mm}

Bondarenko ([\ref{ref:1}], Theorem 3) proved that if $m=4k^2$ ($k \in \mathbb{N}$) and $k$ is not divisible by 3, then $m \notin A_3$. We check that $\frac{b_2}{b_1}+\frac{b_3}{b_2}+\frac{b_4}{b_3}+\frac{b_1}{b_4}=8$ does not have a solution where $b_i$ ($1 \leq i \leq 4$) are positive integers and $1\leq b_1 \leq 200, 1 \leq b_2,b_3,b_4 \leq 1000$. From this numerical evidence, we make the following conjecture. 

\begin{conj} \label{conj2.4}
There exists a positive integer $k \geq 4$ such that $k \notin A_4$. In particular, $8 \notin A_4$. 
\end{conj}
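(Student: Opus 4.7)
The plan is to translate the conjecture into a Diophantine assertion and then to search for an obstruction to the resulting equation. Suppose for contradiction that $8 \in A_4$, so there exist $a_1, \ldots, a_4 \in \mathbb{Q}_{>0}$ with $\prod_{i=1}^{4} a_i = 1$ and $\sum_{i=1}^{4} a_i = 8$. Setting $b_1 = 1$ and $b_{k+1} = a_k b_k$ (indices read cyclically mod $4$) gives $a_i = b_{i+1}/b_i$, and after clearing denominators we may assume $b_1, b_2, b_3, b_4 \in \mathbb{Z}_{>0}$ with $\gcd(b_1, b_2, b_3, b_4) = 1$. The sum condition then becomes
\[
b_2^2 b_3 b_4 + b_1 b_3^2 b_4 + b_1 b_2 b_4^2 + b_1^2 b_2 b_3 \;=\; 8\, b_1 b_2 b_3 b_4,
\]
and the task reduces to showing that this equation admits no coprime positive integer solution.

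My next step would be to seek a $2$-adic obstruction, prompted by the factor of $8$ on the right. Writing $v_i := v_2(b_i)$ and $S := v_1 + v_2 + v_3 + v_4$, the four left-hand terms have $2$-adic valuations $S + v_{i+1} - v_i$ (cyclically), while the right-hand side has valuation $S + 3$. Coprimality forces $\min_i v_i = 0$, and since the differences $v_{i+1} - v_i$ sum to zero cyclically, the minimum term valuation is at most $S$; matching the two sides therefore demands substantial $2$-adic cancellation among the terms attaining the minimum. In the uniform case $v_1 = v_2 = v_3 = v_4 = 0$ (all $b_i$ odd), the congruence $c^2 \equiv 1 \pmod 8$ collapses the left-hand side modulo $8$ to $(b_1 + b_3)(b_2 + b_4)$, giving the necessary condition $(b_1 + b_3)(b_2 + b_4) \equiv 0 \pmod 8$. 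I would push this analysis modulo $16$ and $32$ and work through the mixed-valuation cases in turn, hoping to eliminate every residue pattern. An analogous examination modulo $3$ is also natural, in view of Bondarenko's use of the hypothesis $3 \nmid k$ in the three-variable setting.

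The main obstacle, and the reason the assertion remains conjectural, is that a purely local obstruction may well not exist. The equation clearly has real solutions — for instance, setting $a = b = c$ and $d = 1/a^3$ reduces to $3a^4 - 8a^3 + 1 = 0$, which has a positive real root — and the projective variety cut out by the displayed equation is a quartic surface in $\mathbb{P}^3$, generically a K3 surface, a class of variety on which the Hasse principle routinely fails in subtle ways. If the congruence approach stalls, the natural next lines of attack are a Brauer--Manin obstruction, i.e.\ identifying an unramified Azumaya algebra on the surface whose local invariants sum to a nonzero element of $\mathbb{Q}/\mathbb{Z}$ at every adelic point, or the construction of an elliptic fibration on the surface whose generic fibre has Mordell--Weil rank zero, permitting a fibrewise exclusion of positive rational points. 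Both routes require machinery considerably heavier than the three-square theorem used in Theorem~\ref{thm2.2}, which is consistent with the author's decision to leave the statement as a conjecture supported by the finite search up to $b_1 \leq 200$, $b_2, b_3, b_4 \leq 1000$ noted in the paragraph preceding it.
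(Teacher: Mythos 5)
This statement is a conjecture, and the paper does not prove it: the only support offered is the reported numerical search for solutions of $\frac{b_2}{b_1}+\frac{b_3}{b_2}+\frac{b_4}{b_3}+\frac{b_1}{b_4}=8$ in the box $1\leq b_1\leq 200$, $1\leq b_2,b_3,b_4\leq 1000$. So there is no proof in the paper to compare yours against; what can be assessed is whether your sketch is sound and whether it gets further than the paper does. Your reduction is correct and lands exactly on the equation the paper searched: writing $a_i=b_{i+1}/b_i$ cyclically and clearing denominators gives $b_2^2b_3b_4+b_1b_3^2b_4+b_1b_2b_4^2+b_1^2b_2b_3=8\,b_1b_2b_3b_4$, which is the homogeneous form of the paper's search equation. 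Your mod $8$ computation in the all-odd case is also correct: since $b_i^2\equiv 1\pmod 8$, the left side collapses to $(b_1+b_3)(b_2+b_4)$. But note that each factor is automatically even, so the resulting condition only forces one of the two factors to be divisible by $4$; this prunes residue classes without producing a contradiction, and (as you anticipate) the surface is everywhere locally soluble in any reasonable sense, so no congruence argument alone can finish.

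In short: you have not proved the statement, you say so explicitly, and the paper has not proved it either. Your honest conclusion --- that a purely local obstruction is unlikely to exist for this quartic surface and that something like a Brauer--Manin obstruction or an analysis of an elliptic fibration would be needed --- is a reasonable assessment of why the author left this as a conjecture backed only by a finite search. The one thing to be careful about if you pursue this further is the positivity constraint: the conjecture asserts the absence of \emph{positive} rational solutions, not of all rational solutions, so even a successful Brauer--Manin computation on the projective surface would not immediately settle it; conversely, an argument confined to the positive orthant (e.g.\ via a height descent on an elliptic fibration restricted to the relevant real component) is what would actually be required.
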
 

\vspace{3mm}

\textbf{Acknowledgments} \hspace{3mm} The author would like to thank the anonymous referee for their helpful comments. The author also thanks Minryoung Kim who find solutions of the equation $\frac{b_2}{b_1}+\frac{b_3}{b_2}+\frac{b_4}{b_3}+\frac{b_5}{b_4}+\frac{b_1}{b_5}=n$ for some positive integers $n \leq 100$. 

\section{References}

\begin{enumerate}

\item{A. V. Bondarenko, Investigation of one class of Diophantine equations, Ukrainian Math. J. 52(6) (2000) 953-959.} \label{ref:1} 

\item{E. Dofs, On some classes of homogeneous ternary cubic Diophantine equations, Ark. Mat. 13(1) (1975) 29-72.} \label{ref:2} 

\item{E. Dofs, Solutions of $x^3+y^3+z^3=nxyz$, Acta Arith. 73(3) (1995) 201-213.} \label{ref:3}

\item{A. Nowicki, Liczby Wymierne, Podr{\'{o}}{\.{z}}e po Imperium Liczb, cz.1, Wydanie drugie. Wydawnictwo OWSIiZ, Torun, Olszty{\'{n}}, 2012.} \label{ref:4}

\end{enumerate}

\vspace{3mm}

Department of Mathematical Sciences, Seoul National University, Seoul 151-747, Korea 

e-mail: moleculesum@snu.ac.kr

\end{document}